\newtheorem{que}{Question}[section]
\newtheorem{thm}[que]{Theorem}
\newtheorem{defi}[que]{Definition}
\newtheorem{cl}[que]{Claim}
\newtheorem{obs}[que]{Observation}
\newenvironment{proof1}[2] {\paragraph{Proof of {#1} {#2} :}}{\hfill$\square$}
\newcommand{\cF}{\mathcal{F}}
\newcommand{\cP}{\mathcal{P}}
\newcommand{\cT}{\mathcal{T}}
\newcommand{\cH}{\mathcal{H}}
\begin{document}


\setcounter{page}{83}
\publyear{22}
\papernumber{2104}
\volume{185}
\issue{1}

    \finalVersionForARXIV


\title{Spanning Structures in Walker--Breaker Games}

\author{Jovana Forcan\thanks{Also affiliated at: Department of Informatics, Faculty of Philosophy,
                           University of East Sarajevo, Bosnia and Herzegovina. \newline
                    Address for correspondence: Department of Mathematics and Informatics,
                   Faculty of Sciences, University of Novi Sad.},  Mirjana Mikala\v{c}ki\thanks{The author's research is partially supported by
                      Ministry of Education, Science and Technological Development, Republic of Serbia,
                 Grant No. 451-03-68/2022-14/200125.\newline \newline
          \vspace*{-6mm}{\scriptsize{Received February 2020; \ accepted January 2022.}}}
     \\
Department of Mathematics and Informatics \\
Faculty of Sciences, University of Novi Sad \\
Trg Dositeja Obradovi\'{c}a 4, 21000 Novi Sad, Serbia \\
dmi.jovana.jankovic@student.pmf.uns.ac.rs,  mirjana.mikalacki@dmi.uns.ac.rs
}

\maketitle

\runninghead{J.\ Forcan and M.\ Mikala\v{c}ki}{Spanning Structures in Walker--Breaker Games}

\begin{abstract}
We study the biased $(2:b)$ Walker--Breaker games, played on the edge set of the complete graph on $n$ vertices, $K_n$. These games are a variant of the Maker--Breaker games with the restriction that Walker (playing the role of Maker) has to choose her edges according to a walk. We look at the two standard graph games -- the Connectivity game and the Hamilton Cycle game and show that Walker can win both games even when playing against Breaker whose bias is of the order of magnitude $n/ \ln n$.
\end{abstract}

\begin{keywords}
positional games, Walker--Breaker games, spanning structures
\end{keywords}

\section{Introduction}
We study Walker--Breaker games, a variant of the well-known \textit{Maker--Breaker} positional games, recently introduced by Espig, Frieze, Krivelevich and Pegden in~\cite{EFKP15}. Given two positive integers, $a$ and $b$, a finite set $X$ and $\cF\subseteq 2^X$, in the biased $(a:b)$ Maker--Breaker game $(X,\cF)$, two players, \emph{Maker} and \emph{Breaker} take turns in claiming previously unclaimed elements of $X$ until all of them are claimed, with Maker going first. The parameters $a$ and $b$ are referred to as biases of Maker, respectively Breaker, the set $X$ is referred to as \emph{the board} of the game, and the members of $\cF$ are called the \emph{winning sets}. When there is no risk of confusion and $X$ is known, we use only $\cF$ to denote the game $(X,\cF)$. Maker wins the game $\cF$ if, by the end of the game, she claims all elements of some $F \in \cF$. Breaker wins otherwise.

Maker--Breaker games are one of the most studied representatives of positional games and more about these games and some others can be found in~\cite{BeckBook,HKSSbook}.
In the most common setup, the Maker--Breaker games are played on the edge set of the complete graph on $n$ vertices, $K_n$, where the winning sets are some graph-theoretic structures, like spanning trees, Hamilton cycles, triangles, etc. In particular, we look at two standard games: the \emph{Connectivity game}, $\cT_n$, and the \emph{Hamilton Cycle game}, $\cH_n$. In the Connectivity game $\cT_n$, Maker's goal is to claim a spanning tree, and in the Hamilton Cycle game $\cH_n$, Maker's aim is to claim a Hamilton cycle. When $a=b=1$ (\emph{unbiased game}) it is known that Maker can win easily in both of the games (see~\cite{Lehman, HKSS2, HS09}) and in fact in most of the standard graph games on $K_n$, for sufficiently large $n$. That motivated the study of biased $(1:b)$ games, first introduced in the seminal paper of Chv\'{a}tal and Erd\H{o}s~\cite{CE78}. In the same paper, Chv\'{a}tal and Erd\H{o}s observed that Maker--Breaker games are \emph{bias monotone}. That means that if the $(1:b)$ game is a Breaker's win for some value of $b$, then the $(1:b+1)$ game is also a Breaker's win. This property enabled the definition of the \emph{threshold bias} of the game $(X,\cF)$, which is the unique integer $b_{\cF}$ such that for all values of $b<b_{\cF}$, the $(1:b)$ game $(X,\cF)$ is a Maker's win and for all values $b\geq b_{\cF}$, the $(1:b)$ game $(X,\cF)$ is a Breaker's win.
For both of the games we are interested in, the Connectivity game $\cT_n$, and the Hamilton Cycle game $\cH_n$, the threshold bias is of the same order of magnitude, i.e.\ $b_{\cT_n}=\Theta\left(\frac{n}{\ln n}\right)$ and $b_{\cH_n}=\Theta\left(\frac{n}{\ln n}\right)$.

In \cite{CE78}, Chv\'{a}tal and Erd\H{o}s showed that the threshold bias for the Connectivity game is between $(1/4 - \varepsilon)n/ \ln n$ and $(1 + \varepsilon)n/ \ln n$ for any $\varepsilon>0$.
The upper bound of the threshold bias for the Hamilton Cycle game is the same as in the Connectivity game.
Since a disconnected graph cannot contain a Hamilton cycle, in the $(1:b)$ Hamilton Cycle game Breaker can apply the same strategy as in the $(1:b)$ Connectivity game.
A slightly better lower bound for the Connectivity game is given latter by Beck \cite{Beck82} who improved a constant factor to $\ln 2 $. Finally, Gebauer and Szab\'{o} in~\cite{GS09} showed that Maker can win in the Connectivity game if $b\leq (1 - \varepsilon)n/\ln{n}$, for every $\varepsilon > 0$, and in this way proved that the threshold bias for the Connectivity game on $K_n$ is asymptotically equal to $n/ \ln n$.
Relying on the approach and results from \cite{GS09}, Krivelevich in~\cite{K11} showed that the threshold bias for the Hamilton Cycle game is asymptotic to $n / \ln n$, by proving that Maker can win  if $b\leq \left(1-\frac{30}{\ln^{1/4}{n}}\right)\frac{n}{\ln{n}}$.

Doubly biased $(a:b)$ Maker—Breaker Connectivity game and Hamilton Cycle game are studied in \cite{HMS11, MM13} where the lower and upper threshold biases in both games are established for all relevant values of $a$, and for most values these bounds are quite sharp.
Although $(a:b)$ Maker--Breaker games are studied much less then unbiased games or $(1:b)$ games, there are examples of games where just a slight change in bias changes the outcome of the game. One such example is the \emph{diametar-}$2$ game, i.e.\ the game where the board is $E(K_n)$ and the winning sets are all spanning subgraphs of $K_n$ with the diameter at most $2$. This game, when played in $(1:1)$ setup, is known to be Breaker's win. However, it is shown in~\cite{BMP09} that $(2:2)$ diameter-$2$ game is Maker's win. There are also other examples of the so-called doubly biased games (see e.g.\ \cite{BeckBook, HKSSbook, BMP09}).

Now, as mentioned at the beginning, we are interested in the Walker--Breaker games, where Walker (playing the role of Maker) is restricted by the way of choosing her edges. Namely, she has to choose her edges according to a walk, i.e.\ for her starting position $v$ she can choose any vertex, and when it is her turn to play, she can claim any edge $vw$ incident to $v$ not previously claimed by Breaker (but it can be previously claimed by herself). After that, the vertex $w$ becomes her current position.
The study of this type of games was initiated by Espig, Frieze, Krivelevich and Pegden in~\cite{EFKP15} and further developments were made by Clemens and Tran in~\cite{CT16}.
\noindent When playing with bias $1$, Walker cannot claim any spanning structure for any $b\geq 1$ in the $(1:b)$ Walker--Breaker games, as Breaker can easily isolate a vertex in her graph by choosing in each move an edge between some fixed vertex, say $u$, isolated in Walker's graph, and Walker's current position and another $b-1$ arbitrary edges also incident with $u$. Therefore, to help Walker, we increase the bias by just one, and look at two $(2:b)$ Walker--Breaker games. It turns out that this small increase in the bias makes a big difference for Walker in terms of claiming a spanning structure in a graph.

\medskip
 More specifically, we focus on two questions raised in~\cite{CT16}:
\begin{que}(\cite{CT16}, Problem 6.4)\label{q1}
What is the largest bias $b$ for which Walker has a strategy to create a spanning tree of $K_n$ in the $(2 : b)$ Walker--Breaker game on $K_n$?
\end{que}

\begin{que}(\cite{CT16}, Problem 6.5)\label{q2}
Is there a constant $c > 0$ such that Walker has a strategy to occupy a Hamilton cycle of $K_n$ in the $(2 : \frac{cn}{\ln{n}})$ Walker--Breaker game on $K_n$?
\end{que}
\noindent In this paper, we answer both questions, and obtain that in the biased $(2:b)$ version of both Walker--Breaker games -- the Connectivity and the Hamilton Cycle -- the threshold bias is of the order $\frac{n}{\ln n}$, which corresponds to the $(1:b)$ Maker--Breaker version of the games.
\noindent To be able to answer Question~\ref{q1}, we need the following two theorems. The first one gives the lower bound for the threshold bias in the $(2:b)$ Walker--Breaker Connectivity game.

\begin{thm} \label{tConn}
For every $0<\varepsilon < \frac{1}{4}$ and every
large enough $n$, Walker has a strategy to win in the biased $(2 : b)$ Walker--Breaker Connectivity game played on $K_n$, provided that $b \leq  \left(\frac{1}{4} - \varepsilon \right) \frac{n}{\ln{n}}.$
\end{thm}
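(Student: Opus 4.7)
The plan is to have Walker simulate the $(1:b)$ Maker--Breaker Connectivity strategy of Gebauer and Szab\'o (GS), using her bias of $2$ to pay for the walk constraint: one of the two edges she plays per turn will reposition her at the vertex prescribed by the virtual Maker strategy, while the other will serve as the actual ``productive'' edge. Since the GS theorem gives a Maker win all the way up to $b\le(1-o(1))\tfrac{n}{\ln n}$ and we only require $b\le(\tfrac14-\varepsilon)\tfrac{n}{\ln n}$, the considerable slack will absorb the movement overhead together with a constant safety margin in the potential analysis.

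\emph{Strategy outline.} Walker begins at an arbitrary vertex $v_0$ and maintains her graph as a spanning forest $F$. At the start of each turn she identifies, according to the simulated strategy, a target edge $uw$ -- roughly, $u$ is a vertex in the smallest component of $F$ with largest Breaker-degree, and $w$ lies in a different component with $uw$ still free. Let $x$ be her current position. If $xu$ is still free she claims $xu$ (moving to $u$) and then $uw$, merging two components of $F$. Otherwise, she picks a common free neighbour $y$ of $x$ and $u$, plays the detour $x\to y\to u$, and defers the productive edge $uw$ to her following turn.

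\emph{Main obstacle.} The crux is showing that the detour $x\to y\to u$ is always executable, which amounts to a uniform upper bound on Breaker's degree at every vertex. More precisely, if $d_B(v)\le(1-\delta)(n-1)$ holds throughout the game for some absolute $\delta>0$, then any two vertices share $\delta n-o(n)$ free common neighbours and the detour succeeds. This bound should follow from a GS-style potential argument: the natural weight $\sum_v 2^{d_B(v)}$, suitably restricted to vertices still relevant to the Connectivity strategy, grows by a controlled amount per Breaker turn, and Walker's target-selection rule ensures that each of her productive claims offsets this growth by dropping the weight by a comparable amount. The constant $\tfrac14$ in the theorem is the compounding of a factor of $2$ for dedicating one edge per turn to movement with a further factor of $2$ built into the potential analysis so that the Breaker-degree bound stays strictly below $n-1$.

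\emph{Completion.} Once the degree bound is established, the detour never fails and Walker's productive edges faithfully reproduce those of the virtual Maker, up to at most one turn of delay per detour. The number of detour turns is controlled by the number of Breaker edges incident to Walker's trajectory, which is a vanishing fraction of the $\Theta(n\ln n)$ total turns, so the GS guarantees transfer directly to $F$: by the end of the game $F$ contains a spanning connected subgraph of $K_n$, proving Theorem~\ref{tConn}.
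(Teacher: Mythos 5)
The central gap is in your ``main obstacle'' paragraph. You assert that a uniform bound $d_B(v)\le(1-\delta)(n-1)$ ``holds throughout the game for every vertex'' (modulo a vague restriction to ``relevant'' vertices), but no potential or degree argument can deliver this: Breaker is always free to dump arbitrarily many edges onto a single vertex once it has dropped out of the set that the potential function tracks. Both the Gebauer--Szab\'o potential argument and the Box-game argument bound the Breaker degree \emph{only at vertices that are still dangerous}, i.e.\ untouched by Maker/Walker. The structural feature that makes the paper's proof go through is precisely that Walker's strategy is engineered so that she \emph{always finishes her move at a previously unvisited vertex}: in each round she identifies the unvisited vertex $a$ of maximum Breaker degree and travels $w\to y\to a$. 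Because $a$ was unvisited at the start of the round, a Box-game estimate (with BoxMaker bias $2b$, since each edge lies in two boxes) gives $d_B(a)\le 2b\ln n - b$; and because Walker's current position $w$ was the target of the previous round, hence unvisited at its start, the same bound applied there plus Breaker's single intervening move gives $d_B(w)\le 2b\ln n$. Then $d_B(w)+d_B(a)\le 4b\ln n - b < n-2$ guarantees a free common neighbour $y$. Your strategy does not have this invariant: after a productive step $x\to u\to w$ you land at $w$, which may have been visited long ago and whose Breaker degree is completely uncontrolled, so the next detour can fail. This is exactly the subtlety the paper's strategy is designed around, and your sketch skips it.

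A secondary problem: the claim that detour turns form ``a vanishing fraction of the $\Theta(n\ln n)$ total turns'' is both unjustified and unnecessary. Breaker can block the direct edge from Walker's current position to her next target in every single round, so you cannot assume detours are rare; a correct argument (as in the paper) must make every move a two-step move and only needs to show that the intermediate vertex exists. Also note that the paper does not simulate Gebauer--Szab\'o at all; it uses the far simpler observation that a walk is automatically connected, so Walker only needs to visit every vertex, and she does so by greedily targeting the unvisited vertex of largest Breaker degree, with a Box-game analysis giving the constant $\tfrac14$ (one factor $2$ from each edge lying in two boxes, one factor $2$ from needing a common free neighbour of $w$ and $a$). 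Importing the tighter GS analysis, as you propose, would require re-deriving a Walker-compatible degree bound and is not what produces the constant $\tfrac14$ here.
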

Theorem~\ref{tBreaker} provides the upper bound for the threshold bias in the $(2:b)$ Connectivity game. Its proof is is very similar (actually almost identical) to the proof of Theorem~$3.1$ by Chv\'{a}tal and Erd\H{o}s~\cite{CE78}, as their proof can be easily adjusted to comply with Walker--Breaker rules and Walker's bias $2$. For the completeness of the paper, the proof of Theorem~\ref{tBreaker} is given in the Appendix.

\begin{thm} \label{tBreaker}
For every $\varepsilon >0$ and $b \geq (1+\varepsilon )\frac{n}{\ln{n}}$, Breaker has a strategy to win in the $(2:b)$ Walker--Breaker Connectivity game on $K_n$, for large enough $n$.
\end{thm}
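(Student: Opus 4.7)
The plan is to adapt the classical Chv\'atal--Erd\H{o}s upper-bound proof for the $(1:b)$ Maker--Breaker Connectivity game to our $(2:b)$ Walker--Breaker setting. Since isolating a single vertex in Walker's graph already prevents her from building a spanning tree, it suffices to describe a Breaker strategy that achieves such an isolation whenever $b \geq (1+\varepsilon)\tfrac{n}{\ln n}$.

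Breaker's strategy is the vertex-isolation procedure from \cite{CE78}. Call a vertex $v$ \emph{alive} if no edge incident to $v$ has been claimed by Walker so far, and let $A$ denote the current set of alive vertices. On each of his turns, Breaker picks an alive vertex $u \in A$ minimising the number of still-unclaimed edges from $u$ to $A \setminus \{u\}$, and claims $b$ such edges (all incident to $u$ with the other endpoint in $A$). If fewer than $b$ unclaimed edges from $u$ to $A\setminus\{u\}$ remain at the moment Breaker looks at $u$, then $u$ has already been isolated and Breaker has won; otherwise he iterates.

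To analyse this, I would use the standard exponential potential $\Phi = \sum_{v \in A}\bigl(1 - \tfrac{1}{b+1}\bigr)^{-d(v)}$, where $d(v)$ is the number of still-unclaimed edges from $v$ into $A \setminus \{v\}$. The two round-by-round estimates are: Walker's two edges per round remove at most four vertices from $A$, producing a bounded additive increase of $\Phi$; and Breaker's greedy choice at the lowest-danger alive vertex decreases $\Phi$ by a multiplicative factor that dominates the Walker-side gain. Iterating this bound until either $A$ is emptied or some alive vertex is fully surrounded by Breaker's edges, and using the hypothesis $b \geq (1+\varepsilon)\tfrac{n}{\ln n}$, one concludes that the latter event must occur first, so Breaker isolates a vertex.

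The only substantive deviation from the Chv\'atal--Erd\H{o}s argument is that Walker now kills up to four vertices of $A$ per round instead of two; equivalently, the bound $|A| \geq n - 4T$ replaces $|A| \geq n - 2T$ after $T$ rounds. Since the original argument already carries $(1+\varepsilon)$-slack in both the number of rounds required to empty a vertex and in the per-round potential inequality, this extra factor of two is comfortably absorbed into the $\varepsilon$. Walker's walk restriction plays no further role, because each edge she claims still touches exactly two vertices. The main (and only mild) technical step is to verify the potential inequality with the doubled vertex-removal rate, which we carry out in the Appendix.
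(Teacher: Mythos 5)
Your proposal diverges substantially from the paper's proof, and as written it contains a gap that I do not think can be repaired without essentially re-deriving the Chv\'atal--Erd\H{o}s argument.

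The paper's proof, like Chv\'atal--Erd\H{o}s's original Theorem~3.1, is a two-stage strategy for Breaker: in Stage~1 he builds a clique $C$ on $m=\lfloor b/2\rfloor$ vertices that are still untouched by Walker (the halving accounts for Walker killing up to two fresh vertices per round), and in Stage~2 he runs the Box game $Box(m,m(n-m),b,1)$ on the bundles of edges leaving $C$, invoking the Box game criterion (Theorem~\ref{boxgame}) to show that he can fill some $c_i$'s bundle before Walker touches $c_i$. The clique stage is not cosmetic: it is precisely what guarantees that once the box for $c_i$ is filled, \emph{all} $n-1$ edges at $c_i$ belong to Breaker, so $c_i$ is genuinely isolated.

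Your one-stage greedy strategy has no analogue of this. You define $d(v)$ as the number of unclaimed edges from $v$ into $A\setminus\{v\}$, and Breaker only ever claims edges whose \emph{both} endpoints are alive. Consequently, when a vertex $w$ leaves $A$ (because Walker touched $w$), any still-free edge $uw$ with $u$ alive drops out of $d(u)$'s count but is never claimed by Breaker afterwards, since it no longer lies inside $A$. So driving $d(u)$ to $0$ does not isolate $u$: there will in general be free edges from $u$ to dead vertices, and Walker can walk from a previously visited vertex across one of those free edges into $u$. ``$A$ empty or some alive vertex fully surrounded'' is therefore a false dichotomy under your strategy; the second alternative never occurs. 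This is the reason the Chv\'atal--Erd\H{o}s strategy pre-claims a clique and then plays a Box game on edges to \emph{all} of $V\setminus C$, including vertices Walker has already visited.

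A secondary concern: the potential $\Phi=\sum_{v\in A}\bigl(1-\tfrac{1}{b+1}\bigr)^{-d(v)}$ is large when vertices have high remaining degree, so it is a ``safety'' potential of the kind used on the Maker/Walker side (as in Gebauer--Szab\'o) rather than a danger potential that Breaker wants to drive up; the inequalities you gesture at (Breaker causes a multiplicative decrease that dominates Walker's additive gain) are stated but not derived, and it is not clear that a bound of the form $\Phi$ small yields an isolated vertex rather than merely a vertex of small alive-degree. Finally, you describe your plan as ``adapting the classical Chv\'atal--Erd\H{o}s proof,'' but Chv\'atal--Erd\H{o}s's upper-bound proof is the clique-plus-Box-game argument, not a potential-function argument; the paper's Appendix reproduces it almost verbatim with the single modification $m=\lfloor b/2\rfloor$ and the observation that in Stage~2 Walker can enter at most one clique vertex per round.
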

The following theorem answers Question~\ref{q2} and gives the lower bound for the threshold bias in the $(2:b)$ Walker--Breaker Hamilton Cycle game.

\begin{thm} \label{tHam}
There exists a constant $\alpha > 0$ for which for every large enough $n$ and $b \leq \alpha \frac{n}{\ln{n}}$, Walker has a winning strategy in the $(2 : b)$ Hamilton Cycle game played on $K_n$.
\end{thm}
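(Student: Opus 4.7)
The plan is to reduce, with constant-factor overhead, the Walker strategy to Krivelevich's~\cite{K11} Maker strategy for the $(1:b)$ Hamilton Cycle game on $K_n$. As in that work, the argument splits into two phases.

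\emph{Phase I -- Build an expander.} Walker's first goal is to occupy a spanning subgraph $G_0$ of $K_n$ such that (a) $\delta(G_0) \geq c \ln n$ for an absolute constant $c>0$, and (b) every $U \subseteq V(K_n)$ with $|U| \leq n/4$ satisfies $|N_{G_0}(U)\setminus U| \geq 2|U|$. Both properties can be ensured by following a Gebauer--Szab\'o~\cite{GS09}-style minimum-degree strategy of the same flavour used to prove Theorem~\ref{tConn}. In each of her turns Walker makes a walk of length two and designates one of the two edges as ``productive'' (chosen to raise the degree of the currently most endangered vertex in the Gebauer--Szab\'o sense), while the other edge is used as ``transport'' through $G_0$ to reposition her at the endpoint required for the next productive claim. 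Since Walker's graph is always connected (being the edge set of a walk) and the diameter of $G_0$ shrinks to $O(\log n)$ as soon as degrees grow moderately, the transport cost is absorbed into the second edge per turn. The standard potential-function analysis then carries over for $b \leq \alpha n/\ln n$ with a sufficiently small constant $\alpha>0$.

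\emph{Phase II -- Close up via boosters.} Call a non-edge $uv$ a \emph{booster} for a graph $G$ if $G+uv$ has strictly longer maximum path length or is Hamiltonian. The P\'osa rotation--extension lemma, combined with the expansion guaranteed in Phase~I, implies that at every stage in which Walker's current graph $G \supseteq G_0$ is non-Hamiltonian, the number of boosters of $G$ is at least $c' n^2$ for some constant $c'>0$. Walker iteratively claims boosters: she locates a longest path $P$ in $G$, uses P\'osa rotations to exhibit a booster incident to a potential endpoint $u$, walks from her current position to $u$ along $G_0$-edges, and claims the booster edge there. Each booster addition costs $O(\mathrm{diam}(G_0)) = O(\log n)$ moves, one for the productive claim and the rest for walking, so Phase~II ends in $O(n \log n)$ Walker-moves, well within the $\Theta(n \log n)$ total turns of the game. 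Between consecutive booster claims Breaker grabs only $O(\log n)$ edges, far fewer than $c' n^2$, so some booster is always available to Walker.

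The main obstacle is verifying that each ``simulated Maker move'' can be performed as the next edge of a legitimate walk without inflating the move count. This is exactly the role of Walker's bias being $2$ rather than $1$: in every turn one edge executes the Maker move and the other edge performs the repositioning. The two calculations above -- small diameter of $G_0$ with the abundance of boosters, and $b = O(n/\ln n)$ allowing $O(\log n)$ Breaker edges per booster -- confirm that this overhead fits. Expansion is monotone under the addition of edges, so the Phase~I guarantees remain valid throughout Phase~II, and a Hamilton cycle of Walker's graph appears by the time the booster iterations close up. The constant $\alpha$ in the statement is then taken to be small enough to absorb the overhead from both phases.
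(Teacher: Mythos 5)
Your proposal takes a genuinely different route from the paper: you try to port Krivelevich's expander-plus-booster argument for the $(1:b)$ Maker--Breaker Hamiltonicity game to the Walker setting, whereas the paper follows the Ferber--Krivelevich--Naves framework, having Walker simulate a random graph $H\sim\mathbb{G}(n,p)$ while playing an auxiliary $MinBox$ game, and then invoking the Lee--Sudakov local resilience theorem for Hamiltonicity (Theorem~\ref{pt} together with Theorem~\ref{lr}). The two approaches are not equivalent, and the paper's route avoids precisely the accounting difficulty that breaks your Phase~II.

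The concrete gap is in your Phase~II budget. You claim each booster costs $O(\operatorname{diam}(G_0))=O(\log n)$ Walker turns and that ``between consecutive booster claims Breaker grabs only $O(\log n)$ edges.'' That last count is off by a factor of $b$: Breaker claims $b$ edges per turn, so $O(\log n)$ turns means $O(b\log n)=O(n)$ Breaker edges per booster, and over the $\Theta(n)$ booster insertions Phase~II requires, Breaker accumulates $\Theta(n^2)$ edges --- the same order as the $c'n^2$ boosters guaranteed by the P\'osa rotation argument. At that point ``some booster is always available'' is no longer automatic; it becomes a delicate constant-chasing claim that the proposal does not carry out, and which is made harder by the fact that $O(n\log n)$ Phase~II turns is already essentially the entire length of the game. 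The fix is to drop the ``walk along $G_0$'' idea and reuse the free-intermediate shortcut from the proof of Theorem~\ref{tConn}: Walker at position $w$ who wants to reach $u$ looks for a vertex $y$ with $wy$ and $yu$ both free (or in $E(W)$), which exists as long as $d_B(w)+d_B(u)<n-2$, so transport costs $O(1)$ turns, Phase~II lasts $O(n)$ turns, and Breaker gets only $O(bn)=O(n^2/\ln n)=o(n^2)$ edges. But controlling $d_B(u)$ for the \emph{specific} booster endpoint $u$ is itself nontrivial and is exactly the kind of degree-control bookkeeping the paper offloads onto the $MinBox$ game of Theorem~\ref{theorem}. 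Separately, your Phase~I asserts that a Gebauer--Szab\'o-style minimum-degree strategy yields the expansion property $|N_{G_0}(U)\setminus U|\ge 2|U|$ for all $|U|\le n/4$; minimum degree alone does not give expansion, and Krivelevich's actual Stage~1 strategy is more elaborate precisely for this reason, so this step also needs an argument rather than an appeal to \cite{GS09}.
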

Since being the first player in Maker--Breaker is always an advantage, it is common to assume that the player for which a winning strategy is provided is the second player to move. When providing a winning strategy for Walker in Theorem~\ref{tConn} and Theorem~\ref{tHam},
we suppose that Breaker is the first player, i.e.\  one \textit{round} in the game consists of a move by Breaker followed by a move of Walker. To prove Theorem~\ref{tBreaker} we provide Breaker with the strategy that works even if Walker is the first player. \\

The rest of the paper is organized as follows. In Section~\ref{prel} we list the tools necessary for proving Theorem~\ref{tConn} and Theorem~\ref{tHam}. In Section~\ref{cg} we prove Theorem~\ref{tConn} and in Section~\ref{Hamiltonicity} we prove Theorem~\ref{tHam}. Finally, in Section~\ref{cr} we conclude with some remarks.

\subsection{Notation}
Our notation is standard and follows that of \cite{West01}. Specifically, we use the following. \smallskip

For a given graph $G$ by $V(G)$ and $E(G)$ we denote its vertex set and edge set, respectively.
The order of a graph $G$ is denoted by $v(G)=|V(G)|$, and the size of the graph by $e(G)=|E(G)|$.  \smallskip

For $X,Y \subseteq V(G)$, let $E(X, Y) := \{xy \in E(G) : x \in X, y \in Y\}$ and let $N(X,Y) := \{y\in Y : \exists x\in X \text{ such that } xy \in E(G) \}$.  By $N(X) := \{v \in V(G) : \exists x \in X \text{ such that } vx \in E(G)\} $ we denote the neighbourhood of $X$.
When $X$ consists of a single vertex $x$, we abbreviate $N(x)$ for $N(\{x\})$ and $N(x, Y)$ for $N(\{x\}, Y)$. Let $d_G(x) = |N(x)|$ denote the degree of vertex $x$ in $G$.
Given two vertices $x,y\in V(G)$ an edge in $G$ is denoted by $xy$.  \smallskip

Assume that the Walker--Breaker game, played on the edge set of a graph $G$, is in progress. At any given moment during this game, we denote the graph spanned by Walker's edges by $W$ and the graph spanned by Breaker's edges by $B$. For some vertex $v$ we say that it is \emph{visited} by a player if he/she has claimed \emph{at least one} edge incident with $v$. A vertex is \emph{isolated/unvisited} if no edge incident to it is claimed. The edges in $E(G \setminus(W\cup B))$ are called \emph{free}.  \smallskip

Let $n$ be a positive integer and let $0 \leq p:= p(n) \leq 1$. The Erd\H{o}s-R\'{e}nyi model $\mathbb{G}(n,p)$ is a random subgraph $G$ of $K_n$, constructed by retaining each edge of $K_n$ in $G$ independently at random with probability $p$.
We say that graph $G\sim \mathbb{G}(n, p)$ possesses a graph property $\mathcal{P} $  \textit{asymptotically almost surely}, or a.a.s., for brevity, if the probability that $\mathbb{G}(n, p)$ possesses $\mathcal{P} $ tends to 1 as $n$ goes to infinity. Throughout the paper, we will use the approximation $\sum_{i=1}^n\frac{1}{i}\leq \ln n +1$.

\section{Preliminaries}
\label{prel}

For the analysis of Walker's winning strategy in the Connectivity game, we need the \emph{Box game}, first introduced by Chv\'{a}tal and Erd\H{o}s in \cite{CE78}. The rules are as follows.
The\emph{ Box game} $Box(k, t, a,1)$ is played on $k$ disjoint winning sets, whose sizes differ by at most 1, that contain altogether $t$ elements. The players in the Box game will be called \emph{BoxMaker} and \emph{BoxBreaker}.
BoxMaker claims $a$ elements per move, while BoxBreaker claims 1 element per move. BoxMaker wins if and only if she succeeds to claim all elements of some winning set. Otherwise, BoxBreaker wins. Chv\'{a}tal  and Erd\H{o}s in \cite{CE78} defined the following recursive function:
$$f(k,a): = \left\{
\begin{array}{rl}
0, & k=1\\
\left \lfloor \frac{k(f(k-1,a)+a)}{k-1} \right \rfloor , & k\geq 2.
\end{array}
\right.$$
The value of $f(k,a)$ can be approximated as
$$(a-1)k\sum _{i=1}^{k-1}\frac{1}{i} \leq f(k,a) \leq ak\sum _{i=1}^{k-1}\frac{1}{i}.$$

\noindent The following theorem from~\cite{CE78} gives the criterion for BoxMaker's win in $Box(k,t,a,1)$.

\begin{thm} (\cite{CE78}, Theorem 2.1, \textbf{the Box game criterion})\label{boxgame}
Let $a$, $k$ and $t$ be positive integers. BoxMaker has a winning strategy in $Box(k,t,a,1)$ if and only if $t\leq f(k,a)$.
\end{thm}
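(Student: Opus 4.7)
The plan is to prove both directions of the criterion by induction on $k$, using the recursive definition of $f(k,a)$ to reduce each instance to a smaller sub-game. The base case $k=1$ is immediate: $f(1,a)=0$, and a single-box game is won by BoxMaker iff the box is already empty (otherwise BoxBreaker taints it on her first move). For the inductive step, the goal is to show that after one full round of optimal play the game $Box(k,t,a,1)$ reduces to an instance of $Box(k-1,t',a,1)$ with $t'\leq f(k-1,a)$ precisely when $t\leq f(k,a)$.

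For the sufficiency direction ($t\leq f(k,a)$ implies BoxMaker wins), BoxMaker's strategy is to distribute her first $a$ picks among the boxes so as to keep the box sizes as balanced as possible, and then invoke induction on the sub-game after BoxBreaker's taint. The worst case for her is that BoxBreaker taints the smallest box (leaving the largest $k-1$ in play), so it suffices to ensure that after BoxMaker's move the largest $k-1$ boxes have total free size at most $f(k-1,a)$, equivalently
\[
(t-a) - \left\lfloor \frac{t-a}{k}\right\rfloor \;\leq\; f(k-1,a).
\]
This follows by rearranging the bound $f(k,a)(k-1)/k \leq f(k-1,a)+a$ implicit in the recurrence. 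The necessity direction ($t>f(k,a)$ implies BoxBreaker wins) is dual: after BoxMaker claims any $a$ elements, BoxBreaker taints an un-tainted box with the smallest current free count, and by an averaging argument the total across the remaining $k-1$ un-tainted boxes is at least $(t-a)(k-1)/k$, which strictly exceeds $f(k-1,a)$ since the floor in the recursion gives $f(k,a)+1 > k(f(k-1,a)+a)/(k-1)$. The inductive hypothesis then hands BoxBreaker the win in the reduced game.

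The chief technical obstacle is the book-keeping around the floor function: the integer bounds must be shown to align in both directions, likely necessitating a case split on the residue of $t$ modulo $k-1$, and the strict-versus-weak inequality needs to be tracked with care. A secondary concern is verifying that BoxMaker's balancing strategy is actually executable at every step --- that is, that she can always arrange her $a$ picks without exceeding the free content of any individual box. This follows from maintaining the invariant that the free counts of the un-tainted boxes differ by at most~$1$ after each of her moves, but the invariant itself must be established carefully since BoxBreaker's taints alter which boxes count toward the reduced game.
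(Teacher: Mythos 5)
The paper does not prove this theorem at all: it is quoted from Chv\'atal and Erd\H{o}s~\cite{CE78} as a black-box ingredient for the appendix, so there is no in-paper argument to compare against. Judged on its own, your sketch has a concrete error stemming from a move-order inconsistency. Your base case implicitly has BoxBreaker moving first (that is the only reading under which $f(1,a)=0$ yields the $k=1$ criterion, and it matches the paper's use of the Box game in the appendix, where Walker, playing BoxBreaker, begins). Your inductive step, however, has BoxMaker playing her $a$ picks first and BoxBreaker tainting second, which leads to the target inequality $(t-a)-\lfloor (t-a)/k\rfloor \leq f(k-1,a)$. That inequality does not follow from $t\leq f(k,a)$: rearranging the recurrence gives only $(t-a)(k-1)/k \leq f(k-1,a)+a/k$, and since $(t-a)-\lfloor(t-a)/k\rfloor = \lceil (t-a)(k-1)/k\rceil$, passing to integers costs up to an extra $\lceil a/k\rceil\geq 1$. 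It is in fact false, e.g.\ for $a=2$, $k=2$, $t=4=f(2,2)$, where the left-hand side is $1$ while $f(1,2)=0$. With BoxBreaker moving first and BoxMaker then balancing, the correct target is $t-\lfloor t/k\rfloor - a\leq f(k-1,a)$, which does follow from $t\leq f(k,a)$.

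A second gap lies in the reduction itself. The game $Box(k-1,t',a,1)$ is only defined for boxes whose free counts differ by at most one. On the sufficiency side this can be maintained because BoxBreaker's taint deletes one of $k$ balanced boxes and BoxMaker then rebalances, so the induction applies. On the necessity side, however, BoxMaker is the adversary and is under no obligation to balance; after one round the surviving boxes can be arbitrarily skewed, and the inductive hypothesis as stated simply does not cover that position. You would need either a monotonicity lemma (skewing the boxes, at fixed total, can only help BoxBreaker) or to replace the literal reduction with a potential/weight argument over unbalanced profiles. This is not a minor floor-function nuisance; it is essentially the defect in the original Chv\'atal--Erd\H{o}s argument that Hamidoune and Las Vergnas later repaired, which is one reason papers, including this one, cite the criterion rather than reprove it inline.
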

\noindent In order to answer Question \ref{q2}, we need some statements related to local resilience and random graphs.

\begin{defi} (\cite{CT16}, Definition 2.1)
For $n \in \mathbb{N}$, let $\mathcal{P} = \cP(n)$ be some graph property that is monotone increasing, and let
$0 \leq \varepsilon, p = p(n) \leq 1$. Then $\cP$ is said to be $(p,\varepsilon )$-resilient if a random graph $G \sim \mathbb{G}(n,p)$ a.a.s. has the following property: For every $R \subseteq G$ with $d_R(v) \leq \varepsilon d_G(v)$ for every $v \in V (G)$ it holds that $G \setminus R \in \cP$.
\end{defi}

\noindent The next theorem provides a good bound on the local resilience of a random graph with respect to Hamiltonicity.

\begin{thm} (\cite{LS12}, Theorem 1.1) \label{lr}
For every positive $\varepsilon > 0$, there exists a constant $C = C(\varepsilon )$ such that for $p \geq \frac{C\ln{n}}{n}$, a graph $G \sim \mathbb{G}(n,p$) is a.a.s. such that the following holds. Suppose that $H$ is a subgraph of $G$ for which $G' = G - H$ has minimum degree at least $(1/2 +\varepsilon )np$, then $G'$ is Hamiltonian.
\end{thm}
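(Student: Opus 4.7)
The plan is to adapt the classical P\'osa rotation--extension technique to the resilient subgraph $G' = G - H$, exploiting the fact that $G \sim \mathbb{G}(n,p)$ a.a.s.\ has strong expansion properties that survive the deletion of a subgraph $H$ of bounded maximum degree.

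First I would establish that for $C = C(\varepsilon)$ large enough and $p \geq C\ln n / n$, the random graph $G$ a.a.s.\ satisfies the following three properties, all of which follow from Chernoff bounds together with a union bound over subsets of each relevant size: (P1) $d_G(v) = (1 \pm \varepsilon/10)np$ for every $v \in V(G)$; (P2) a quantitative small-set expansion $|N_G(S)\setminus S| \geq K\varepsilon^{-1} np\,|S|$ for every $S$ with $|S| \leq n/(K\ln n)$, where $K$ is a large constant to be chosen later; (P3) edge-distribution $e_G(A,B) \geq \frac{1}{2}p|A||B|$ for every pair of disjoint sets $A, B$ with $|A|,|B| \geq \varepsilon n / 10$.

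Next I would transfer these properties to $G' = G - H$. The minimum degree hypothesis forces $d_H(v) \leq (1/2-\varepsilon)np$ for every $v$, which gives $\delta(G') \geq (1/2+\varepsilon)np$ directly from (P1). For (P2), the edges of $H$ incident to $S$ number at most $(1/2-\varepsilon)np\,|S|$, hence destroy at most that many external neighbours; the slack factor $K\varepsilon^{-1}$ in (P2) still leaves $\Omega(np\,|S|)$ neighbours of $S$ in $G'$. For (P3), the bound $e_{G'}(A,B)\geq e_G(A,B) - (1/2-\varepsilon)np\max(|A|,|B|)$ is still $\Omega(p|A||B|)$ when both $|A|$ and $|B|$ are $\Omega(n)$. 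Then I would run the P\'osa rotation--extension argument inside $G'$. Let $P = v_0 v_1 \cdots v_k$ be a longest path in $G'$; by maximality every $G'$-neighbour of $v_k$ lies on $P$. Applying rotations with endpoint $v_0$ fixed generates an end-set $E \subseteq V(P)$; the standard consequence of the small-set expansion in (P2) is that either $|E|$ grows to $\Omega(n)$ or some rotation produces a new endpoint adjacent in $G'$ to a vertex outside $V(P)$, the latter contradicting maximality of $P$. Hence $|E|=\Omega(n)$, and if $P$ is not a Hamilton path the same expansion step yields an edge from $E$ to $V(G')\setminus V(P)$, a contradiction. So $P$ is a Hamilton path, and performing rotations at both endpoints yields two linear-size sets of candidate endpoint pairs; property (P3) in $G'$ then produces the closing edge of a Hamilton cycle.

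The main obstacle is the delicate trade-off in the transfer step: since $H$ may remove almost half of the edges at every vertex, naive expansion arguments fail, and the constants in (P2) must be chosen with enough slack to absorb the adversarial deletion. A related subtlety is that the P\'osa rotation argument must be driven by the expansion of \emph{small} sets, where $\mathbb{G}(n,p)$'s relative expansion is strongest and where the loss inflicted by $H$ is easiest to dominate; the constant $C = C(\varepsilon)$ in the hypothesis $p \geq C\ln n/n$ will ultimately be dictated by the size of $K$ required to make both (P2) and its post-deletion version hold simultaneously.
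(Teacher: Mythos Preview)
The paper does not prove Theorem~\ref{lr}; it is quoted verbatim from Lee and Sudakov~\cite{LS12} and used as a black box in the proof of Theorem~\ref{tHam}. There is therefore no ``paper's own proof'' to compare your proposal against.

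As for the proposal itself, the overall architecture---pseudo-random properties of $G$, transfer to $G'$, P\'osa rotation--extension---is indeed the route taken in~\cite{LS12}, but two of your steps do not work as written. First, property (P2) is impossible: taking $|S|=n/(K\ln n)$ and $p\geq C\ln n/n$ gives $K\varepsilon^{-1}np\,|S|\geq \varepsilon^{-1}Cn>n$. The correct statement has expansion factor $\Theta(np)$ only for $|S|\leq cn/(np)$, after which one must switch to edge-distribution bounds such as (P3). Second, and more importantly, your transfer of (P2) to $G'$ fails: the adversary chooses $H$ after seeing $G$, and since $|N_G(S)|$ is itself only of order $np\,|S|$ for small $S$, deleting up to $(1/2-\varepsilon)np$ edges at each vertex of $S$ can wipe out a constant fraction of $N_G(S)$---your ``slack factor $K\varepsilon^{-1}$'' is illusory because it was never there to begin with. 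The actual argument in~\cite{LS12} does not attempt to show that \emph{every} small set expands in $G'$. Instead it uses the hypothesis $\delta(G')\geq(1/2+\varepsilon)np$ directly on the endpoint sets generated by the rotation process: each endpoint has $(1/2+\varepsilon)np$ neighbours on the longest path, and a sparsity property of $G$ (few edges inside any small set) forces these neighbours to produce many \emph{new} endpoints, so the endpoint set grows geometrically until it is linear. The expansion is thus driven by the degree condition in $G'$ applied only to the specific sets that arise, combined with edge-sparsity of $G$ on small sets, rather than by a uniform small-set expansion property of $G'$.
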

\noindent The proof of Theorem \ref{tHam} will follow from Theorem \ref{lr} and the following statement, which is the key ingredient. We will prove both of them in Section \ref{Hamiltonicity}.

\begin{thm} \label{pt}
For every constant $0 < \varepsilon  \leq 1/100$ and a sufficiently large integer $n$ the following holds. Suppose that $\frac{10\ln{n}}{\varepsilon n} \leq p < 1$ and $\cP$ is a monotone increasing graph property which is $(p,\varepsilon)$-resilient. Then in the $\left( 2: \frac{\varepsilon}{60p}\right)$ game on $K_n$ Walker has a strategy to create a graph that satisfies~$\cP$.
\end{thm}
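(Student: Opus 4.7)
The proof plan exploits the fact that Walker's graph $W$ and Breaker's graph $B$ are edge-disjoint by the rules of the game. If Walker's strategy produces a $W$ that (with high probability over its internal randomness) contains, in distribution, a subgraph $G' \sim \mathbb{G}(n,p)$, then $G' \cap B = \varnothing$ automatically, so the $(p,\varepsilon)$-resilience of $\cP$ applied with $R = \varnothing$ gives $G' \in \cP$ a.a.s., and by monotonicity of $\cP$ we obtain $W \in \cP$. The theorem thus reduces to designing a randomized Walker strategy whose output graph stochastically dominates $\mathbb{G}(n,p)$.

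My candidate strategy has Walker play a uniform random walk on the free-edge subgraph, augmented by a safety valve that prevents her from stepping into vertices where Breaker has accumulated too many edges. Concretely, at each of her two moves per round, from current position $v$, Walker samples uniformly at random an edge $vu$ among the free edges at $v$ such that $u$ still has a sufficiently large ``escape'' neighbourhood. With the prescribed bias $b = \varepsilon/(60p)$, the game lasts $R = \binom{n}{2}/(b+2) \sim 30 n^2 p/\varepsilon$ rounds, so Walker makes $2R \sim 60 n^2 p/\varepsilon$ edge-moves. A standard random-walk/counting calculation then shows that every ``safe'' vertex is visited $\Theta(np/\varepsilon)$ times, and hence a fixed edge $uv$ is claimed with marginal probability at least $1 - (1-\Omega(1/n))^{\Omega(np/\varepsilon)} \geq 120 p/\varepsilon \geq p$, using $\varepsilon \leq 1/100$.

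Promoting ``marginal probability $\geq p$'' to genuine stochastic domination by $\mathbb{G}(n,p)$ is the main analytic step. My plan is to use a coupling/thinning argument: independently for each of Walker's edge moves, flip a biased coin to decide whether to include that edge in a thinned subgraph $W' \subseteq W$, with the bias tuned so that each edge of $K_n$ lies in $W'$ independently with probability $p$, producing $W' \sim \mathbb{G}(n,p)$. Walker's roughly $120/\varepsilon$-fold surplus of edges compared to the $\binom{n}{2} p$ edges of $\mathbb{G}(n,p)$ gives enough slack for this coupling to succeed with high probability, to be verified by Chernoff/Azuma concentration and a union bound over the $\binom{n}{2}$ potential edges; the hypothesis $p \geq 10\ln n/(\varepsilon n)$ supplies precisely the $\ln n$ factor needed to close the union bound.

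The main obstacle is controlling Breaker's concentration attacks. Since $bR \sim n^2/2$, Breaker has in principle enough moves to isolate up to $\Omega(n)$ vertices, which would simultaneously trap Walker's random walk and spoil stochastic domination at such vertices. The safety valve together with a ``danger response'' sub-strategy---in which Walker abandons the random walk momentarily and actively claims edges at threatened vertices---must keep the set of dangerous vertices of size $o(n)$ throughout the game; this is where a careful accounting of Breaker's degree accumulation is required. A subsidiary technical point is the dependence between consecutive random-walk steps, which can be handled by a martingale concentration (Azuma--Hoeffding) argument along the sequence of Walker's moves, so that the visit counts to individual vertices and edges are tightly concentrated around their expected values.
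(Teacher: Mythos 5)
Your proposal takes a genuinely different route from the paper, and unfortunately the central step does not go through.

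The paper does \emph{not} try to show that Walker's graph $W$ stochastically dominates $\mathbb{G}(n,p)$. Instead, Walker explicitly generates a bona fide random graph $H\sim\mathbb{G}(n,p)$ during the game by tossing an independent $p$-biased coin for every edge of $K_n$, \emph{including edges Breaker already owns}. Because the coins are tossed on all edges and are genuinely independent, $H$ has exactly the right distribution regardless of anyone's strategy. Walker then structures her walk (via a MinBox auxiliary game controlling Breaker's degree at each vertex) so that the edges of $H$ she fails to claim---precisely the ones that were already in Breaker's graph at exposure time, the ``type II failures''---number at most roughly $\tfrac{9}{10}\varepsilon(n-1)p \le \tfrac{90}{99}\varepsilon\,d_H(v)$ per vertex. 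These missed edges play the role of $R$ in the $(p,\varepsilon)$-resilience definition, with $G' = H\setminus R$, giving $G'\in\cP$. The decomposition ``first build a true $\mathbb{G}(n,p)$, then show you grab almost all of it'' is what makes the resilience hypothesis applicable \emph{for $R\ne\emptyset$}; you only invoke it with $R=\emptyset$, which throws away the actual content of the hypothesis.

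The concrete gap in your plan is the coupling/thinning step. Per-edge marginal probability at least $p$ does not imply that $W$ contains a coupled copy of $\mathbb{G}(n,p)$: stochastic domination of a product measure requires far more than first-moment control, and the events $\{uv\in W\}$ for different pairs $uv$ are strongly and adversarially correlated through Walker's trajectory and Breaker's adaptive play. Conditional on the realised walk, each edge is either present or absent with probability $0$ or $1$, so no per-move thinning can manufacture the independence required for $W'\sim\mathbb{G}(n,p)$; Azuma--Hoeffding along Walker's moves controls visit-count fluctuations, but it does not address the correlation structure among distinct edges. Moreover, the marginal bound itself is fragile: Breaker can simply claim a targeted edge $uv$ early, driving its probability of ever entering $W$ to zero, and your ``safety valve''/``danger response'' are left entirely unspecified as to how they recover $\Omega(p)$ marginals at such edges. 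The paper sidesteps all of this by decoupling the randomness (the coin tosses, which are independent by fiat) from the combatorial game (which only determines \emph{which} of the successful coin tosses translate into claimed edges), and by measuring the loss per vertex with the MinBox danger function; that is the missing idea.
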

 To prove Theorem \ref{pt} we will use an auxiliary \emph{MinBox} game which is motivated by the study of the degree game~\cite{GS09}. The $MinBox(n, D, \alpha , b)$ game is a Maker--Breaker game played on $n$ disjoint boxes $F_1,...,F_n$ each of order at least $D$. For our needs, we will suppose that Breaker starts the game, i.e.\ in one round Breaker claims $b$ elements and then Maker claims 1 element.
Maker wins the game if she succeeds to claim at least $\alpha |F_i|$ elements in each box $F_i$, $1\leq i \leq n$.

\eject
The number of elements in box $F$ claimed so far by Maker and Breaker are denoted by $w_M(F)$ and $w_B(F)$, respectively.
The box $F$ is \emph{free} if there are elements in it still not claimed by any of the players.
If $w_M(F ) < \alpha |F|$, then $F$ is an \emph{active} box.
For each box $F$ we set the \emph{danger} value to be $\mathrm{dang}(F) := w_B(F ) - b \cdot w_M(F)$.

\medskip
 We also need the following upper bound on the danger value.

\begin{thm} (\cite{FKN15}, Theorem 2.3) \label{theorem}
Let $n, b, D \in \mathbb{N}$, let $0 < \alpha < 1$ be a real number, and consider the game $MinBox(n, D, \alpha , b)$. Assume that Maker plays as follows: In each turn, she chooses an arbitrary free active box with maximum danger, and then she claims one free element from this box. Then, proceeding according to this strategy,
$$\mathrm{dang}(F) \leq b(\ln{n} + 1)$$
is maintained for every active box $F$ throughout the game.
\end{thm}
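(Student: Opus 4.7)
The plan is to induct on the round number, proving a stronger statement than the bare maximum bound: an invariant on the entire sorted sequence of dangers. Let $D_1(t) \geq D_2(t) \geq \cdots \geq D_{m(t)}(t)$ list the dangers of the $m(t)$ active boxes immediately after Maker's $t$-th move, in non-increasing order. I propose maintaining
\[
D_j(t) \;\leq\; b \sum_{i=j}^{m(t)} \frac{1}{i} \qquad \text{for every } 1 \leq j \leq m(t).
\]
Applied at $j = 1$, this yields $D_1(t) \leq b\sum_{i=1}^{m(t)} \tfrac{1}{i} \leq b(\ln n + 1)$ using $m(t) \leq n$ and the harmonic-sum estimate recalled in Section~1, which is exactly the bound claimed for every active box. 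The base case $t = 0$ is trivial, as every danger starts at $0$.

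Two arithmetic facts drive the induction. First, since Breaker claims $b$ board elements per turn and each raises the danger of its box by $1$, the quantity $\sum_{F}\mathrm{dang}(F)$ grows by exactly $b$ during Breaker's move, while Maker's single claim lowers the chosen box's danger by $b$, so total danger is conserved round by round. Second, Breaker is free to concentrate all $b$ of his elements into a single box, so an individual danger can jump by up to $b$ in one move. In round $t+1$, let $D'_1 \geq \cdots \geq D'_{m}$ denote the sorted dangers right after Breaker's move; then $D'_j \leq D_j(t) + b$ individually, but more usefully the total excess $\sum_j (D'_j - D_j(t))$ is at most $b$. Maker now attacks the box attaining $D'_1$, reducing its danger by $b$, and re-sorting produces $\tilde D_1 \geq \cdots \geq \tilde D_{m}$. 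A short case analysis on where the attacked box settles in the new ordering then verifies $\tilde D_j \leq b \sum_{i=j}^{m} \tfrac{1}{i}$ for every $j$, closing the induction. Boxes that become inactive during the round (because $w_M(F) \geq \alpha|F|$) simply leave the list: this shrinks the right-hand side $\sum_{i=j}^{m} \tfrac{1}{i}$, but the remaining dangers are unchanged, so the invariant survives with the smaller $m$.

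The main technical obstacle is precisely this re-sorting step: reducing the top entry by $b$ can send the attacked box past several lower-ranked boxes, so the invariant must be re-verified at every rank simultaneously. The natural way to handle it is to observe that for each new rank $j$, either $\tilde D_j = D'_j$ (no shift happened at that rank), or $\tilde D_j = D'_{j+1}$ (the attacked box dropped below rank $j$), or $\tilde D_j = D'_1 - b$ (this is where the attacked box lands); in the first two cases the inductive hypothesis for round $t$ combined with $D'_j \leq D_j(t) + b$ gives what is needed once one applies the identity $\tfrac{1}{j} + \sum_{i=j+1}^{m}\tfrac{1}{i} = \sum_{i=j}^{m}\tfrac{1}{i}$, while in the third case $D'_1 - b \leq b\sum_{i=1}^{m}\tfrac{1}{i} - b$ provides the needed slack. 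The bound in the theorem statement is understood at the checkpoints immediately after Maker's moves; between Maker's moves Breaker can push the maximum up by at most an additional $b$, a discrepancy absorbed into the $+1$ of $b(\ln n + 1)$ by using the slightly tighter bound $D_1(t) \leq b \sum_{i=1}^{m(t)}\tfrac{1}{i}$ right after Maker.
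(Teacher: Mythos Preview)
The paper does not prove this statement; it is quoted verbatim from \cite{FKN15} (which in turn adapts the potential argument of Gebauer--Szab\'o \cite{GS09}), so there is no in-paper proof to compare against. The argument in those references runs \emph{backward}: assuming some active box first exceeds the bound at time $T$, one builds a nested chain $I_T\subseteq I_{T-1}\subseteq\cdots$ of active boxes whose \emph{average} danger stays high all the way back to time $0$, giving a contradiction.

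Your forward scheme with the rank-wise invariant $D_j(t)\le b\sum_{i=j}^{m(t)}\tfrac1i$ does not work as written, and the problem is not merely a missing detail in the case split. In all three cases you invoke only the per-rank bound $D'_j\le D_j(t)+b$, and this is too weak: in the case $\tilde D_j=D'_{j+1}$ your chain gives $\tilde D_j\le D_{j+1}(t)+b\le b\sum_{i=j+1}^{m}\tfrac1i+b$, while the target is $b\sum_{i=j}^{m}\tfrac1i=b\sum_{i=j+1}^{m}\tfrac1i+b/j$, which fails for every $j\ge2$; the same overshoot occurs in the other two cases. You note the total-excess bound but never deploy it. Worse, the invariant itself is \emph{not inductive}: for $m=3$ and $b=6$ the bounds read $D_1\le11,\ D_2\le5,\ D_3\le2$; taking the (hypothetically allowed) state $(11,5,-16)$, Breaker can play $(5,1,0)$ to reach $(16,6,-16)$, and after Maker hits the top box the state is $(10,6,-16)$ with $\tilde D_2=6>5$. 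So no amount of sharpening the case analysis will rescue this particular invariant --- you need either a strictly stronger forward invariant (e.g.\ one controlling all top-$j$ partial sums simultaneously) or to switch to the backward averaging argument of \cite{GS09,FKN15}.
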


\section{The connectivity game}
\label{cg}

\begin{proof1}{Theorem} {\ref{tConn}}
First, we present a winning strategy for Walker and then prove that during the game she can follow it. By $U \subseteq  V (K_n)$ we denote the set of vertices, not yet visited by
Walker, which is dynamically maintained throughout the game. At the beginning of the game, we have $U := V (K_n)$.

\paragraph{Walker's strategy.} In the first round Walker visits three vertices. She identifies two vertices $v_0$ and $v_1$ with the largest degrees in Breaker's graph. Let $d_B(v_0) \geq d_B(v_1)$. She starts her move in vertex $v_0$ and then, if $v_0v_1 \in E(B)$, she finds a vertex $u \in U$ such that the edges $v_0u$ and $uv_1$ are free, and claims them. Otherwise, if $v_0v_1 \notin E(B)$, she claims $v_0v_1$ and then from $v_1$
moves to some $u' \in  U$ such that $d_B(u') = \mathrm{max}\{d_B(u) : u \in U\}$ (ties broken arbitrarily) and $v_1u'$ is free.  \smallskip

In every round $r\geq 2$ Walker visits at least one vertex from $U$.
After Breaker's move, Walker identifies a vertex $a \in U$ such that $d_B(a) =\mathrm{max}\{d_B(u) : u \in U\}$ (ties broken arbitrarily).
Then Walker checks if there is some vertex $y\in U$ such that edges
$wy$ and $ya$ are free, where $w$ is Walker's current position, and she claims these two edges $wy$ and $ya$. If no such vertex $y\in U$ exists, then Walker finds an arbitrary vertex $y' \in V(K_n)$, which could be already visited by Walker, such that edges $wy'$ and $y'a$ are free. She claims these two edges.   \smallskip

Assuming Walker can follow this strategy, she plays at most
$n-2$ rounds.  \smallskip

In the remainder of this proof, we will show that Walker can follow the proposed strategy.

\medskip
First, we are going to consider the maximum degree of unvisited vertices in Breaker's graph $B$.
We can analyse Walker's strategy through an auxiliary Box game, where she takes the role of BoxBreaker, and plays as a second player.
In the Box game each box represents all free edges adjacent to some vertex $u\in U$. At the beginning of the game, the number of boxes is $n$ and the
number of elements in each box is $n-1$. Boxes are not disjoint since for any $u\in U$, vertices from $N(u)$ can also belong to $U$.
So, each edge of the original game belongs to two of these boxes. BoxBreaker can pretend that the boxes are disjoint and that BoxMaker claims $2b$ elements in every move. So, we look at the Box game $Box(n, n(n-1), 2b, 1)$.
We estimate the size of the largest box that BoxMaker could fill within at most $n-2$ rounds.
This gives us the maximum degree of vertices from $U$ in Breaker's graph $B$.

\medskip
The size of the largest box is at most
\begin{equation} \label{eq1}
\frac{2b}{n} + \frac{2b}{n-1} + ... + \frac{2b}{3} \leq 2b(\ln{n}+1)- \left(\frac{2b}{2} + \frac{2b}{1}\right) = 2b \ln{n} - b.
\end{equation}
Now, we are going to prove that Walker can follow her strategy. The proof goes by induction on the number of rounds. After Breaker's first move we have that $d_B(v_0)+d_B(v_1) \leq b+1$, so it is obvious that Walker can visit vertices $v_0$ and $v_1$. Suppose that Walker already played $k \leq n-3$ rounds and visited at least $k+2$  vertices.
Suppose that Walker finished this round at some vertex $w$ and at the end of this round $d_B(w) \leq 2b \ln{n} -b$.

\medskip
According to (\ref{eq1}), after Breaker's move in round $k+1$, some vertex $a \in U$ can have degree at most $2b \ln{n} - b$ in $B$. Also, Breaker could have claimed all $b$ edges incident with $w$, in his $(k+1)^{\mathrm{st}}$ move, so $d_B(w)\leq 2b\ln n$.
Walker finds some vertex $y'\in V(K_n)$ such that edges $wy'$ and $y'a$ are free, with preference that $y' \in U$.
Such vertex exists since
$$d_B(w) + d_B(a) \leq 4b \ln{n} - b < n-2$$
So, Walker is able to play her move in $(k+1)^{st}$ round.
\end{proof1}

\section{The Hamilton cycle game}
\label{Hamiltonicity}

In this section we prove Theorem~\ref{pt} and Theorem~\ref{tHam}.
The proof of Theorem \ref{pt} follows very closely to the proofs of Theorem 1.5 in \cite{FKN15} and Theorem 2.4 in \cite{CT16}.

\begin{proof1}{Theorem} {\ref{pt}}
We show that Walker has a strategy to build a graph that satisfies property $\mathcal{P}$.
Walker's strategy will be partly deterministic and partly random.  \smallskip

In the random part of the strategy, Walker generates a random graph $H \sim \mathbb{G}(n, p)$ on the vertex set $V(K_n)$, by tossing a biased coin on each edge of $K_n$ (even if this edge already belongs to $E(B)$), independently at random, which succeeds with probability $p$.
When Walker tosses a coin for an edge $e$, we say that she \textit{exposes} the edge $e$.
For each vertex $v\in V(K_n)$ we consider the set $U_v \subseteq N(v,V(K_n))$ which contains those vertices $u$ for which the edge $vu$ is still not exposed. At the beginning of the game, $U_v = N(v,V(K_n))$ for all $v \in V(K_n)$.

\medskip
To decide for which edges she needs to toss a coin, Walker identifies an \textit{exposure vertex} $v$ (the way of choosing the exposure vertex will be explained later).
If her current position is different from $v$, she needs to play her move deterministically. That is, she finds two edges $wy$ and $yv$, where $w$ is her current position and $y$ is some vertex from $V(K_n)$, such that $wy, yv \notin E(B)$.
She claims these edges and after that move, $wy, yv \in E(W)$, where by $W$ we denote a graph spanned by all Walker's edges.

Once she comes to the exposure vertex $v$, she starts tossing her coin for edges incident with $v$ with the second endpoint in $U_v$ in the arbitrary order, until she has a first success or until all edges incident with $v$ are exposed.
Every edge $e$ on which Walker has success when tossing her coin is included in $H$. If this edge $e$ is free, Walker claims it.
If the exposure failed to reveal a new edge in $H$, she declares her move a failure of type I. If she has success on an edge, but that edge belongs to $E(B)$, she declares her move a failure of type II.   \smallskip

Let $G'$ be a graph containing all the edges in $H\cap W$.

\medskip
We need to prove that $G'\in \cP$. In order to do this we need to show that following her strategy, Walker will achieve
that a.a.s. $d_{G'}(v)\geq (1-\varepsilon )d_H(v)$ holds for each $v\in V(K_n)$, where $0<\varepsilon \leq 1/100$.
Since $H$ is random, the degree of $v$ in $H$ can be determined by using Chernoff's inequality~\cite{ASBook08}. We have
$$\mathbb{P} \left [ Bin(n-1, p) < \frac{99}{100}(n-1)p \right ] = o\left( \frac{1}{n} \right)$$
for $p \geq \frac{10\ln{n}}{\varepsilon n}$. Thus, by the union bound, it holds that a.a.s.
$$d_H(v) \geq \frac{99}{100}(n-1)p$$
for all vertices in $V(K_n)$.

\medskip
So, if we prove that the number of failures of type II is relatively small, at most $\varepsilon d_H(v) $, we get that
$d_{G'}(v) \geq (1-\varepsilon ) d_H(v)$ for every $v\in V(K_n)$.

\medskip
Let $f_I(v)$ and $f_{II}(v)$ denote the number of failures of type I and type II, respectively, for the exposure vertex $v$.

To keep the number of failures of type II small enough, Walker simulates an auxiliary $MinBox(n$, $4n,p/2,4b)$ game in which she takes the role of Maker.
To each vertex $v \in V(K_n)$ Walker assigns the box $F_v$ of size $4n$ at the beginning of the game. For each box $F$ danger value is defined by
$\mathrm{dang}(F)=w_B(F)-4b\cdot w_M(F)$.

\medskip
We describe Walker's strategy in detail.\vspace*{-2mm}

\paragraph{Walker's strategy. } Walker's strategy is divided into two stages.\vspace*{-2mm}

\paragraph{Stage 1.}
After every Breaker's move, she updates the simulated $MinBox(n,4n,p/2,4b)$ game, in the following way: for each of $b$ edges $pq$ that Breaker claimed in his move, Walker assumes that he claimed one free element from $F_p$ and one from $F_q$.

\medskip
In the first round, Walker first identifies a free active box with the largest danger. Let $F_v$ be such a box. Maker claims an element of $F_v$ in the $MinBox(n,4n,p/2,4b)$ game and Walker selects $v$ as the exposure vertex.
Walker starts her move in some vertex $v_0 \in V(K_n)$, $v_0 \neq v$, and then finds some vertex $v_1 \in V(K_n)$, such that edges $v_0v_1$ and $v_1v$ are free. This is possible since $d_B(v_0) + d_B(v) \leq b+1$.
In the second round, Walker starts the exposure process on the edges $vv'$, $v'\in U_v$, that is, proceeds with Case 2 (see case distinction below).

In every other round $r\geq 3$, Walker plays in the following way.
Denote by $w$ Walker's current position and suppose that it is her turn to make a move.
First, she updates the simulated $MinBox(n,4n$, $p/2,4b)$ game, as described above. After that, she checks whether an exposure vertex exists and proceeds with the case distinction below.
Otherwise, she finds a vertex $v$ such that in the simulated $MinBox(n,4n,p/2,4b)$ game $F_v$ is a free active box of the largest danger. If no free box exists, then Walker proceeds to Stage 2. Otherwise, she declares the vertex $v$ as the new exposure vertex, Maker claims an element of $F_v$ in the $MinBox(n,4n,p/2,4b)$ game and then in the real game Walker proceeds with the following cases.\vspace*{-2mm}

\paragraph{Case 1.} $w\neq v$. Walker finds a vertex $y \in V(K_n)$ such that edges $wy$ and $yv$ are free or belong to $E(W)$, where $v$ is the new exposure vertex. Then, she moves to vertex $v$ using these edges. If these edges were free and Walker claimed them, then these edges are now part of the Walker's graph $W$. Walker proceeds with Case 2.\vspace*{-2mm}

\paragraph{Case 2.} Vertex in which Walker is currently positioned is the exposure vertex. Let $\sigma : [|U_v|] \rightarrow U_v$
be an arbitrary permutation on $U_v$. She starts tossing a biased coin for vertices in $U_v$, independently at random with probability of success $p$, according to the ordering of $\sigma $.\vspace*{-2mm}

\paragraph{2a.} If this coin tossing brings no success, she increases the value of $f_I(v)$ by 1 and in the simulated game $MinBox(n, 4n, p/2 , 4b)$ Maker claims $2pn - 1$ additional free elements from $F_v$ or all remaining free elements if their number is less than $2pn - 1$. She updates $U_v = \emptyset $ and removes $v$ from all other $U_{\sigma (i)}$ for each $i \leq |U_v|$. In the real game Walker moves along some edge which is in $E(W)$
and then returns to $v$ by using the same edge.\vspace*{-2mm}

\paragraph{2b.} Suppose that first success happens at the $k^{\mathrm{th}}$ coin toss. Walker declares that $v\sigma (k)$ is an edge of~$H$.
\begin{enumerate}
\itemsep=0.95pt
\item[-] If the edge $v\sigma (k)$ is free, Walker claims this edge and from now on $v\sigma (k)\in E(W)$.
She moves along this edge one more time in order to return to vertex $v$. Also, Walker includes this edge in $G'$. Walker removes $v$ from $U_{\sigma (i)}$ and $\sigma (i)$ from $U_v$, for all $i \leq k$. Maker claims one free element from box $F_{\sigma (k)}$.
\item[-] If the edge $v\sigma (k)$ already belongs to $E(W)$, Walker moves along this edge twice. Also, this edge becomes part of graph $G'$. Walker removes $v$ from $U_{\sigma (i)}$ and $\sigma (i)$ from $U_v$, for all $i \leq k$. Maker also claims one free element from box $F_{\sigma (k)}$.
\item[-] If the edge $v\sigma (k)$ belongs to Breaker, then the exposure is a failure of type II. She increments $f_{II}(v)$ and $f_{II}(\sigma (k))$ by 1. She also updates $U_v := U_v \setminus \{\sigma (i) : i \leq k \} $ and
$U_{\sigma (i)} := U_{\sigma (i)} \setminus \{v\}$ for each $i \leq k$. To make her move, Walker uses an arbitrary edge $vu$ from her graph and returns to $v$ by using the same edge.
\end{enumerate}
At the end of Walker's move in Case 2, the vertex $v$ is not exposure any more.

\paragraph{Stage 2.} Walker tosses her coin on every unexposed edge $uv \in E(K_n)$. In case of success, she declares a failure of type II for both vertices $u$ and $v$.
\eject

\begin{obs}
At any point of Stage 1, there can be at most one exposure vertex.
\end{obs}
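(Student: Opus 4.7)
The plan is to prove the observation by a straightforward induction on the rounds of Stage 1, with the invariant that the number of active exposure vertices is at most one. The base case is immediate: before round~1 none has been declared, and in her first move Walker designates exactly one exposure vertex $v$.

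For the inductive step I would examine a generic round $r\geq 2$, assuming at most one exposure vertex exists at its start. The strategy description dictates that Walker first checks whether an exposure vertex already exists and, only if none does, then consults the MinBox danger ordering to declare a new one. Thus a \emph{second} exposure vertex cannot be created while one is still active, so the only way the invariant could fail is if the existing exposure vertex $v$ persists past the end of a Case~2 move or if a newly declared vertex somehow coexists with an old one.

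To rule this out I would verify the two sub-cases separately. Case~1 is purely navigational: Walker merely claims two edges $wy$ and $yv$ in order to reach the existing exposure vertex $v$, leaving the set of exposure vertices unchanged. Case~2 is the exposure itself; in every branch (failure of type~I, success on a free edge, success on a Walker edge, failure of type~II) the strategy explicitly asserts at its end that the vertex $v$ is no longer an exposure vertex, so after Case~2 the count drops from one to zero. Hence in either branch the invariant is preserved, and when a freshly declared vertex $v$ is immediately handled by Case~1 or Case~2 in the same round, no other exposure vertex is present to conflict with it.

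The main obstacle, if one can call it that, is purely bookkeeping: one must read each branch of Walker's strategy carefully and confirm that none of them introduces a second exposure vertex before the current one has lost its status, and that the MinBox update and exposure-vertex-selection step are always executed in the correct order. No arithmetic or probabilistic input is needed, so the argument reduces to verifying that the control flow of the strategy itself respects the invariant.
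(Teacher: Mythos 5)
Your proposal is correct and matches the reasoning the paper leaves implicit: the Observation is stated without a written proof, being regarded as evident from the control flow of Walker's strategy (a new exposure vertex is declared only if none currently exists, and Case~2 explicitly revokes exposure status at its end). Your induction-on-rounds argument simply spells out that bookkeeping, so it is the same approach made explicit.
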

\begin{cl} \label{c1}
During Stage 1, Breaker claims at most $4b$ elements in the simulated $MinBox$ game between two consecutive moves of Maker.
\end{cl}

\begin{proof} Suppose that Breaker finished his move in round $t$ and now it is Walker's turn to make her move in this round.
Suppose that in the previous round, $t-1$, Walker played according to her strategy from Case 2.
Let $w$ be Walker's current position.
Walker identifies a free active box $F_v$ which has the largest danger. Maker claims an element from $F_v$. If $w=v$, Walker will start her exposure process on the edges $vv'$ with $v' \in U_v$ in round $t$ and then in the following round, $t+1$, she will again identify a new exposure vertex and Maker will claim an element from the corresponding box. In this case between two Maker's moves, Breaker claims $b$ edges, that is, $2b$ elements from all boxes.
If $w \neq v$, Walker needs to play her move deterministically in order to move from $w$ to $v$ and then in round $t+1$ she will start her exposure process. After she identifies the new exposure vertex in round $t+2$, Maker will claim an element from the corresponding box. In this case between two Maker's moves (in rounds $t$ and $t+2$), Breaker claims $2b$ edges, that is, $4b$ elements from all boxes.
\end{proof}

\begin{cl}  \label{c2}
At any point during Stage 1, we have $w_M(F_v) <(1+2p)n$ and $w_B(F_v) < n$ for every box $F_v$ in the simulated game. In particular, $w_M(F_v) + w_B(F_v) < 4n$, thus no box is ever exhausted of free elements.
\end{cl}

\begin{proof}
According to Walker's strategy, the number $w_M(F_v)$ increases by one every time vertex $v$ is the exposure vertex or when coin tossing brings success on edge $vv'$, where $v'$ is exposure vertex.
There can be at most $n-1$ exposure processes in which Walker can toss a coin on an edge that is incident with $v$. So, both cases together can happen at most $n-1$ times.

\medskip
Also, when Walker declares the failure of type I, $w_M(F_v)$ increases by at most $2pn - 1$.
So, we have
$$w_M(F_v)< n + f_I(v) \cdot 2pn .$$
We claim that failure of type I can happen at most once.
This is true, because after the first failure of type I on $v$,
when Maker receives at most $2pn - 1$ additional free elements from $F_v$, the box $F_v$ is not active any more. So, Maker will never play on vertex $v$ again.
Therefore, $w_M(F_v) < n + 2pn = (1 + 2p)n$.

\medskip
During Stage 1, we have $w_B(F_v) < n$, because
Breaker claims an element of $F_v$ in the simulated
game $MinBox(n,4n,p/2,4b)$ if and only if in the real game he claims an edge incident with $v$. Therefore, $w_M(F_v)+w_B(F_v)< 4n$, as stated.
\end{proof}

\begin{cl} \label{c3}
For every vertex $v \in V (K_n)$, $F_v$ becomes inactive before $d_B(v) \geq  \frac{\varepsilon (n-1)}{5}$.
\end{cl}
\eject
\begin{proof}
Assume that $F_v$ is an active box such that $w_B(F_v) = d_B(v) \geq \frac{\varepsilon (n-1)}{5}$. Since $w_B(F_v) - 4b \cdot w_M(F_v) \leq 4b(\ln{n}+1)$, according to Theorem \ref{theorem}, it follows that $w_M(F_v) \geq \frac{w_B(F_v)}{4b}-(\ln{n}+1)$. With $b = \frac{\varepsilon}{60p}$ 
\noindent we have $w_M(F_v) \geq 3p(n-1) - (\ln{n}+1) > 2pn$, where $p\geq \frac{10\ln{n}}{\varepsilon n}$.
This is a contradiction because $F_v$ is active.
\end{proof}

\begin{cl} \label{c4}
Walker is able to move from her current position to the new exposure vertex.
\end{cl}

\begin{proof}
Let $w$ be Walker's current position at the beginning of some round $t$ and let $v$ be the new exposure vertex. This means that at the beginning of round $t - 1$, the box $F_w$ was active and we had $d_B(w) < \frac{\varepsilon (n-1)}{5}$ . If $F_w$ is no longer active at the end of round $t-1$, then after Breaker's move in round $t$ we have $d_B(w) < \frac{\varepsilon (n-1)}{5}+ b$. We need to show that Walker can find a vertex $y \in V (K_n)$ such that edges $wy$ and $yv$ are not in $E(B)$.

\medskip
Since $F_v$ is free active box and taking into consideration the value of $b$, we have
$$d_B(w) + d_B(v) < \frac{2\varepsilon (n-1)}{5}+ b<n-2$$
and so Walker is able to move to $v$.
\end{proof}

\begin{cl} \label{c5}
For every vertex $v \in V (K_n)$ we have that a.a.s. $F_v$ is active, for as long as $U_v \neq \emptyset $.
In particular, at the end of Stage 1 all edges of $K_n$ are exposed a.a.s.
\end{cl}

\begin{proof}
Suppose that there is a vertex $v$ such that $F_v$ is not an active box and $U_v \neq \emptyset $. Since $F_v$ is not an active box it holds that $w_M(F_v) \geq \frac{p}{2}|F_v|$.
Also, since $U_v \neq \emptyset $, we have that $f_I(v) = 0$.
Maker could increase $w_M(v)$ at the moment when $v$ became the exposure vertex, or when Walker had success on edge $vv'$ where $v'$ is the exposure vertex.
Consider the case when $v$ was the exposure vertex.
Since $f_I(v) = 0$, Walker has success on some edges incident with $v$, besides maybe in the last exposure process at $v$. The total number of successes includes the success for every edge incident with $v$ when $v$ was the exposure vertex, since for that edge the coin will not be tossed again. So, Walker had success on at least $\frac{p}{2}|F_v| - 1 = 2np -1$ edges incident with $v$.

\medskip
Also, every time coin tossing brought success for an edge incident with $v$, the degree of vertex $v$ increased in $H$ by one. It follows that $ d_H(v) \geq \frac{p}{2}|F_v| -1 >2(n-1)p $. By using Chernoff's inequality~\cite{ASBook08}, we have
$$\mathbb{P}[Bin(n-1, p) \geq 2(n-1)p] < e^{-(n-1)p/3} = o\left(\frac{1}{n}\right).$$
Applying the union bound, it follows that with probability $1 - o(1)$, there exists no such vertex. \\
Suppose that at the beginning of Stage 2 there is an edge $uv \in E(K_n)$ which is not exposed. This means that $U_v \neq \emptyset $. So, $F_v$ is an active box and we have that $w_M(F_v) < 2pn$. Since $F_v$ is active it also holds that $w_B(F_v) <\frac{\varepsilon (n-1)}{5}$, according to Claim \ref{c3}.
Therefore, since $w_M(F_v) + w_B(F_v) < |F_v|$, the box $F_v$ is free. But this is not possible at the beginning of Stage 2. A contradiction.
\end{proof}

\begin{cl} \label{c6}
For every vertex $v \in V(K_n)$, a.a.s. we have $f_{II}(v) \leq \frac{9}{10} \varepsilon (n-1)p$.
\end{cl}

\begin{proof}
Failures of type II happen in Stage 1 in case when Walker has success on edge which is in $E(B)$. During Stage 1, by Claim \ref{c3}, for as long as the box $F_v$ is active, for
some $v \in K_n$, we have $d_B(v) < \frac{\varepsilon (n-1)}{5}$ . So, for every $v \in V(K_n)$ there is a non-negative integer
$m \leq \frac{\varepsilon (n-1)}{5}$ such that $f_{II}(v)$ is dominated by $Bin(m,p)$.
Applying a Chernoff's argument~\cite{ASBook08} with $p \geq \frac{10\ln{n}}{\varepsilon n}$ we obtain
$$\mathbb{P}\left[Bin(m, p) \geq \frac{9}{10} \varepsilon (n-1)p \right] \leq \left(\frac{e\varepsilon (n-1)p/5}{\frac{9}{10}\varepsilon (n-1)p}\right)^{\frac{9}{10}\varepsilon (n-1)p} = o\left(\frac{1}{n}\right).$$
The probability that there exists such a vertex is $o(1)$. Thus, a.a.s. $f_{II}(v) \leq \frac{9}{10} \varepsilon (n-1)p$ for all
$v \in V(K_n)$.
\end{proof}

\medskip
 According to Claim \ref{c5}, Walker never played Stage 2 since
she exposed all the edges of $K_n$ by the end of Stage 1.
By Claim \ref{c6} we know that for each vertex $v$ coin tossing has brought success for at most
$$\frac{9}{10} \varepsilon (n-1)p \leq \frac{90}{99} \varepsilon d_H(v) $$
edges which were claimed by Breaker.
So, it follows that for each vertex $v\in V(K_n)$ we have
$$d_{G'}(v)\geq d_H(v) - \frac{90}{99} \varepsilon d_H(v) \geq (1-\varepsilon )d_H(v).$$
This completes the proof of Theorem~\ref{pt}.
\end{proof1}

\begin{proof1}{Theorem} {\ref{tHam}}
When we know that Theorem~\ref{pt} holds, the proof of this theorem is almost the same as the proof of Theorem 1.6 in \cite{FKN15}.

\medskip
Let $C = C\left(\frac{1}{6}\right)$ be as in Theorem~\ref{lr}. Let $p=\frac{c\ln{n}}{n}$ where $c=\mathrm{max}\{C,1000\}$, and let $G \sim \mathbb{G}(n, p)$. Note that the property $\cP$ := ``being Hamiltonian" is $(p,\frac{1}{6})$-resilient for $p\geq \frac{c\ln{n}}{n}$.

\medskip
Applying Chernoff's inequality~\cite{ASBook08}, we obtain
$$\mathbb{P}\left[d_G(v)<\frac{5}{6}np\right] < e^{-\frac{np}{72}} = o\left(\frac{1}{n}\right).$$
Thus, by the union bound, it holds that a.a.s.\  $\delta(G) \geq \frac{5}{6}np$.

\medskip
Let $R \subseteq G$ be a subgraph such that $d_R(v) \leq \frac{1}{6}d_G(v)$.
For $R' = G-R$ we have
$$d_{R'}(v) \geq  \frac{5}{6}d_G(v) \geq \frac{25}{36}np > \frac{2}{3}np = (1/2 + 1/6)np.$$
Theorem~\ref{lr} implies that graph $R'$ is Hamiltonian.

\medskip
According to Theorem~\ref{pt}, Walker can create a graph $G' \in \cP$ in the $\left(2:\frac{1}{360p}\right)$ game on $K_n$.
For $p=\frac{c\ln{n}}{n}$ it follows that Walker has a winning strategy in $\left(2:\frac{n}{360c\ln{n}}\right)$ Walker--Breaker Hamilton Cycle game. Setting $\alpha = \frac{1}{360c}$ completes the proof.
\end{proof1}

\section{Concluding remarks}
\label{cr}

In this paper, we have shown that if we increase Walker's bias by just one, she can win both the Connectivity and the Hamilton Cycle game and moreover, the threshold bias in both games is of the same order of magnitude as in the corresponding Maker--Breaker games. By further analysing the constants obtained, the results for the $(2:b)$ Walker--Breaker Connectivity game and Hamilton Cycle game are closer to the results for their $(1:b)$ Maker--Breaker counterpart.

It would be interesting to determine the order of the threshold bias for the $(a:b)$ Walker--Breaker Connectivity and Hamilton Cycle game when $a>2$.
For $a = 2k$, $k \in \mathbb{N}$ we believe that the same techniques used for $(2:b)$ Walker--Breaker games could be applied, while for $a = 2k+1$, the task would be more challenging. The odd value of Walker's bias can be inconvenient for Walker in the cases when in some rounds she needs to reach some vertex and to stay positioned at that vertex in order to be prepared for her next move in the following round.

\medskip
\textbf{Analyzing other games.} Now that we know that Walker (as Maker) can make spanning structures of $K_n$ even when playing against Breaker whose bias is of the order of magnitude $n/\ln n$, we are curious to find out what happens in other games involving spanning structures.

It is not hard to show that for $b=o(\sqrt{n})$ Walker can win
in the $(2:b)$ Pancyclicity game, that is, she can build a graph which consists of cycles of any given length $3\leq l \leq n$. Indeed, since for $p=\omega(n^{-1/2})$ the property $\cP$ :=``being pancyclic" is $(p,1/2 + o(1))$-resilient (see Theorem 1.1 in \cite{KLS10}), by applying Theorem~\ref{pt} with $p$ and $\cP$ we obtain that Walker has the winning strategy in the $\left(2:\frac{1}{180p}\right)$ Pancyclicity game on $K_n$.

We wonder what happens in the $k$-Connectivity game (where the winning sets are all $k$-connected graphs), for $k\geq 2$ and what would be the largest value of $b$ for which Walker can win the $(2:b)$ Walker--Breaker $k$-Connectivity game on $K_n$.

\medskip
\textbf{Different board.} Another question that comes naturally is what happens if we change the board to be the edge set of a general graph $G$ or some sparse graph. How many vertices could Walker visit then in both unbiased and biased games?

\appendix

\section{Proof of Theorem~\ref{tBreaker}}
\noindent To prove Theorem~\ref{tBreaker} we need to provide Breaker with a strategy which will enable him to isolate a vertex from Walker's graph for given bias $b\geq (1+\varepsilon )\frac{n}{\ln{n}}$. For that, we rely on the strategy of Breaker in the $(1:b)$ Maker--Breaker Connectivity game~\cite{CE78}, where Breaker first makes a clique in his graph and then isolates one of the vertices from that clique in Maker's graph. Looking from Breaker's point of view, in the Connectivity game Walker claiming two edges per move can achieve the same as Maker claiming one edge per move. Therefore, in order to win in the $(2:b)$ Walker--Breaker Connectivity game Breaker can apply the same strategy as Breaker in the $(1:b)$ Maker--Breaker Connectivity game~\cite{CE78}.

\begin{proof}
Suppose that Walker begins the game. Breaker's winning strategy is divided into two stages.

\paragraph{Stage 1.} Breaker builds a clique $C$ of order $m=\left \lfloor \frac{b}{2} \right \rfloor $ such that all vertices from $C$ are isolated in Walker's graph.

\paragraph{Stage 2.} Breaker isolates one of the vertices from $C$ in Walker's graph.

\medskip
Now we are going to prove that Breaker can follow his strategy.

\paragraph{Stage 1. }
Breaker will play at most $b/2$ moves.
Suppose that in round $i-1$, where $i\leq b/2$, Breaker built a clique $C_{i-1}$, such that all its vertices are isolated in Walker's graph.
After Walker's move in round $i$, Walker's graph contains at most $2i$ edges and at most $2i+1$ vertices.
Since $i<n/2-2$, there are at least two vertices $u$ and $v$ outside the Breaker's clique which are not incident with Walker's edges.

\smallskip
Then Breaker can claim the edge $uv$ and $2(i-1)$ edges joining $uv$ to $V(C_{i-1})$. In this way he creates clique $C_{i}$ of order $|C_{i-1}| +2$. In the round $i+1$, Walker can visit only one vertex from $C_i$.
After Walker visits some $c\in C_i$, Breaker's graph still contains a clique $C'$ isolated in Walker graph with $V(C')=C_i \setminus \{c\}$.

\paragraph{Stage 2.} Let $C$ be the Breaker's clique of order $m = |C|$ after Stage 1. Let $c_1,c_2,...,c_m \in C$. To isolate some vertex $c_i \in C$ Breaker needs to claim $n-m$ edges $c_iu$, $u\in V(K_n) \setminus V(C)$. In each round Walker can visit at most one vertex from $C$, so she will need to play at most $m$ rounds in Stage 2 to visit all vertices from $C$.
We can use an auxiliary Box Game $Box(m, m \cdot (n-m), b,1)$ to estimate whether Breaker can isolate a vertex from his clique in Walker's graph in at most $m$ moves.
Breaker is the BoxMaker who claims $b$ elements per move. Walker, assuming to play the role of BoxBreaker, can claim an element in at most one unvisited box per move. Note that here, BoxBreaker is the first to play.

\smallskip
It can be verified that $m(n-m)<(b-1)m\ln{(m-1)}$ holds for given $b$ and $m$, and therefore the condition in Theorem~\ref{boxgame} is satisfied, so BoxMaker can win the game.
This means that Breaker is able to isolate a vertex in Walker's graph and thus he wins in the $(2:b)$ Walker--Breaker Connectivity game.
\end{proof}

\end{document}